\newtheorem{theorem}{Theorem}[section]
\newtheorem{corollary}[theorem]{Corollary}
\newtheorem{proposition}[theorem]{Proposition}
\newtheorem{remark}[theorem]{Remark}
\numberwithin{theorem}{section}
\begin{document}
\title[Operator factorization of range space relations ]{Operator
factorization of range space relations }
\author{Maria Joi\c{t}a }
\address{Department of Mathematics\\
Faculty of Applied Sciences, University Politehnica of Bucharest, 313 Spl.
Independentei, 060042, Bucharest, Romania}
\email{mjoita@fmi.unibuc.ro and maria.joita@mathem.pub.ro}
\author{Tania Lumini\c{t}a Costache }
\address{Department of Mathematics\\
Faculty of Applied Sciences, University Politehnica of Bucharest, 313 Spl.
Independentei, 060042, Bucharest, Romania}
\email{lumycos1@yahoo.com}
\subjclass[2000]{Primary 47A06; Secondary 47A05}
\keywords{linear relation, factorization theorem }
\thanks{This paper is in final form and no version of it will be submitted
for publication elsewhere.}

\begin{abstract}
Given two range space relations $A$ and $B$ in Hilbert spaces, we
characterize the existence of a range space operator $T$ such that $A=BT,$
respectively $A=TB.$
\end{abstract}

\maketitle

\section{Introduction and Preliminaries}

Arens \cite{A} introduced the notion of linear relation in linear spaces to
extend the theory of single-valued operator to the multi-valued case.

In what follows we remind some notions and results from \cite{A, Sa, FW, SS}
that we need throughout this paper.

Let $\mathcal{X}$ and $\mathcal{Y}$ be two linear spaces. A subspace $%
A\subseteq $ $\mathcal{X}$ $\times $ $\mathcal{Y}$ \ of $\mathcal{X}$ $%
\times $ $\mathcal{Y}$ is called a linear relation in $\mathcal{X}$ $\times $
$\mathcal{Y}$. The notations dom$(A)$ and ran$(A)$ denote the domain of $A$,
respectively the range of $A$, which are linear subspaces of $\mathcal{X}$,
respectively of $\mathcal{Y}$ defined by 
\begin{equation*}
\text{dom}\left( A\right) =\{x\in \mathcal{X};\left( \exists \right) y\in 
\mathcal{Y}\text{ such that }\left( x,y\right) \in A\},
\end{equation*}%
respectively 
\begin{equation*}
\text{ran}\left( A\right) =\{y\in \mathcal{Y};\left( \exists \right) x\in 
\mathcal{X}\text{ such that }\left( x,y\right) \in A\}.
\end{equation*}
The notations $\ker (A)$ and mul$(A)$ denote the kernel of $A$, respectively
the multi-valued part of $A$, which are linear subspaces of $\mathcal{X}$,
respectively of $\mathcal{Y}$ defined by 
\begin{equation*}
\ker \left( A\right) =\{x\in \mathcal{X};\text{ }\left( x,0\right) \in A\},
\end{equation*}%
respectively 
\begin{equation*}
\text{mul}\left( A\right) =\{y\in \mathcal{Y};\text{ }\left( 0,y\right) \in
A\}.
\end{equation*}

A linear relation $A$ is the graph of a linear operator if and only if mul$%
\left( A\right) =\{0\},$ and a linear operator will be identified with its
graph.

If $A$ $\subseteq $ $\mathcal{X}$ $\times $ $\mathcal{Y}$ \ is a linear
relation, then $A^{-1}$ $\subseteq $ $\mathcal{Y}\times \mathcal{X}$ is a
linear relation with dom$\left( A^{-1}\right) =$ran$\left( A\right) ,$ ran$%
\left( A^{-1}\right) =$dom$\left( A\right) ,\ker \left( A^{-1}\right) =$mul$%
\left( A\right) $ and mul$\left( A^{-1}\right) =\ker \left( A\right) .$

Let $\mathcal{X},\mathcal{Y}$ and $\mathcal{Z}$ be three linear spaces, $%
A\subseteq $ $\mathcal{X}$ $\times $ $\mathcal{Y}$ \ and $B\subseteq $ $%
\mathcal{Y}$ $\times $ $\mathcal{Z}$ \ two linear relations. The product $BA$
defined by 
\begin{equation*}
BA=\{(x,z);(\exists )y\in \mathcal{Y}\text{ such that }\left( x,y\right) \in
A\text{ and }\left( y,z\right) \in \mathcal{Z}\}
\end{equation*}%
is a linear relation in $\mathcal{X}$ $\times $ $\mathcal{Z}$.

Let $\mathcal{X}_{i},$ and $\mathcal{Y}_{i},$ $i=1,2$ be four linear spaces
and $A_{i}\subseteq $ $\mathcal{X}_{i}$ $\times $ $\mathcal{Y}_{i},$ $i=1,2$ 
$\ $be two linear relations. The component -wise sum $A_{1}\widehat{+}A_{2}$
defined by 
\begin{equation*}
A_{1}\widehat{+}A_{2}=\{\left( x_{1}+x_{2},y_{1}+y_{2}\right) ;\left(
x_{1},y_{1}\right) \in A_{1},\left( x_{2},y_{2}\right) \in A_{2}\}
\end{equation*}%
is a linear relation and it is a direct sum if $A_{1}\cap A_{2}=\{\left(
0,0\right) \}.$

An operator range or a range space operator in a Hilbert space $\mathcal{X}$
is a linear subspace of $\mathcal{X}$ which is the range of some bounded
linear operator on $\mathcal{X}$ \cite{FW}. So the closed subspaces of $%
\mathcal{X}$ are range space operators.

The notion of range space relation in Hilbert spaces was introduced by A.
Sandovici as a generalization of the notion of range space operator (see, 
\cite[Section 6.4]{Sa}). Let $\mathcal{X}$ and $\mathcal{Y}$ be two Hilbert
spaces and $A\subseteq $ $\mathcal{X}$ $\times $ $\mathcal{Y}$ be a linear
relation. We say that $A$ is a range space relation if there is a Hilbert
space $\mathcal{Z}$ and a bounded linear operator $F:$ $\mathcal{%
Z\rightarrow X}$ $\times $ $\mathcal{Y}$ such that ran$\left( F\right) =A$.

\begin{remark}
\label{rem}

\begin{enumerate}
\item Let $\mathcal{X}$ and $\mathcal{Y}$ be two Hilbert spaces and $%
A\subseteq $ $\mathcal{X}$ $\times $ $\mathcal{Y}$ a range space relation.
Then $A^{-1}\subseteq $ $\mathcal{Y\times X}$ is a range space relation \cite%
[6.4.1, p. 131]{Sa}.

\item Let $\mathcal{X},$ $\mathcal{Y}$ and $\mathcal{Z}$ be three separable
Hilbert spaces, $A\subseteq $ $\mathcal{X}$ $\times $ $\mathcal{Y}$ and $%
B\subseteq $ $\mathcal{Y}\times \mathcal{Z}$ two range space relations. Then 
$BA$ is a range space relation in $\mathcal{X}$ $\times $ $\mathcal{Z}$%
\textbf{\ }\cite[Proposition 6.4.8]{Sa}.

\item Let $\mathcal{X}_{i},$ and $\mathcal{Y}_{i},$ $i=1,2$ be four Hilbert
spaces and $A_{i}\subseteq $ $\mathcal{X}_{i}$ $\times $ $\mathcal{Y}_{i},$ $%
i=1,2$ $\ $be two range space relations. Then $A_{1}\widehat{+}A_{2}$ is a
range space relation \cite[p.133]{Sa}.
\end{enumerate}
\end{remark}

Given a pair $\left( A,B\right) $ of bounded linear operators on a Hilbert
space $\mathcal{X}$,\ Douglas \cite{D} characterized the existence of a
bounded linear operator $T$ such that $A=BT$ in terms of the range of $A$
and the range of $B$. More precisely, he proved that there is a bounded
linear operator \ $T$ on $\mathcal{X}$ such that $A=BT$ if and only if ran$%
(A)\subseteq $\ ran$(B)$.

Later, Popovici and Sebesty\'{e}n \cite{PS} and Sandovici and Sebesty\'{e}n 
\cite{SS} extended Douglas' result in context of linear relations. Sandovici
and Sebesty\'{e}n \cite{SS} proved some factorization problems of linear
relation by showing that if $A$\ and $B$\ are two linear relations in linear
spaces then there is a linear operator $T$\ such that $A=BT$\ if and only if
ran$(A)\subseteq $\ ran$(B)$\ and mul$(A)=$mul$(B)$\ \cite[Theorem 1]{SS}
and also that there is a linear operator $T$\textbf{\ }such that $A=TB$\ if%
\textbf{\ }and only if dom$(A)\subseteq $dom$(B)$, ker$(B)\subseteq $ker$(A)$%
\ and there is a linear subspace\textbf{\ }$\mathcal{D}\subseteq $mul$(B)$\
and a surjective linear operator\textbf{\ }$T_{\text{mul}}:\mathcal{D}%
\longrightarrow \text{mul}(A)$ \cite[Theorem 2]{SS}\textbf{.}

This paper concerns the factorization problems of range space relations.
More exactly, we show that given two range space relations $A\subseteq $ $%
\mathcal{X}$ $\times $ $\mathcal{Z}$ and $B\subseteq $ $\mathcal{Y}\times 
\mathcal{Z}$, $T=B^{-1}A$ is a range space operator solution of the equation 
$A=BX$ if and only if ran$\left( A\right) \subseteq $ran$\left( B\right) ,$
mul$\left( B\right) =$mul$\left( A\right) $ and $B^{-1}$is the graph of a
linear operator, and if $A\subseteq $ $\mathcal{X}$ $\times $ $\mathcal{Y}$
and $B\subseteq $ $\mathcal{X}\times \mathcal{Z}$, $T=AB^{-1}$ is a range
space operator solution of the equation $A=XB$ if and only if dom$\left(
A\right) \subseteq $dom $\left( B\right) $, $\ker \left( B\right) \subseteq
\ker \left( A\right) $ and $A$ is the graph of a linear operator. We also
obtain, under some hypotheses regarding $A$ and $B$, a necessary and
sufficient condition for the existence of a bounded linear operator $T$ $%
\subseteq \mathcal{X}$ $\times $ $\mathcal{Y}$ such that $A=BT$ for $%
A\subseteq $ $\mathcal{X}$ $\times $ $\mathcal{Z}$ and $B\subseteq $ $%
\mathcal{Y}\times \mathcal{Z\ }$(Theorem \ref{T1}), respectively, for the
existence of a bounded linear operator $T\subseteq \mathcal{Z}$ $\times $ $%
\mathcal{Y}$ such that $A=TB$,$\ $ for $A\subseteq $ $\mathcal{X}$ $\times $ 
$\mathcal{Y}$ and $B\subseteq $ $\mathcal{X}\times \mathcal{Z\ }$(Theorem %
\ref{T2}). The adjoint $A^{\ast }$ of linear relation $A\subseteq \mathcal{X}
$ $\times $ $\mathcal{X}$ in a Hilbert space $\mathcal{X}$ is a range space
relation \cite{A, SAN}. In Proposition \ref{P21}, we obtain a necessary and
sufficient condition for the existence of a bounded linear operator $T$ $%
\subseteq \mathcal{X}$ $\times $ $\mathcal{X}$ such that $A^{\ast }=B^{\ast
}T$ , respectively, for the existence of a bounded linear operator $%
T\subseteq \mathcal{X}$ $\times $ $\mathcal{X}$ such that $A^{\ast
}=TB^{\ast }$ in terms of $A$ and $B$.

\section{Main results}

\bigskip

Let $\mathcal{X},\mathcal{Y}$ and $\mathcal{Z}$ be three Hilbert spaces.
Given two range space relations $A\subseteq $ $\mathcal{X}$ $\times $ $%
\mathcal{Z}$ and $B\subseteq $ $\mathcal{Y}\times \mathcal{Z},$ as in the
case of linear relation \cite[Corollary 2.4]{PS}, we obtain a necessary and
sufficient condition such that $C=B^{-1}A$ is a solution of the equation $%
A=BX.$

\begin{proposition}
\label{P1} Let $\mathcal{X},$ $\mathcal{Y}$ and $\mathcal{Z}$ be three
Hilbert spaces and let $A\subseteq $ $\mathcal{X}$ $\times $ $\mathcal{Z}$
and $B\subseteq $ $\mathcal{Y}\times \mathcal{Z}$ be two range space
relations. Then $C=B^{-1}A$ is a solution in the set of all the range space
relations of the equation $A=BX$ if and only if ran$\left( A\right)
\subseteq $ran$\left( B\right) $ and mul$\left( B\right) \subseteq $mul$%
\left( A\right) .$
\end{proposition}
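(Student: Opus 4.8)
The plan is to prove this as a characterization with two directions, working directly from the definition of the product relation and its inverse.

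Let me set up the key object. We have $C = B^{-1}A$, and I want to understand when $A = BC = B(B^{-1}A)$. The equation $A = BX$ has $C = B^{-1}A$ as a candidate solution, so the whole question reduces to determining exactly when $B(B^{-1}A) = A$ holds.

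Let me first work out what $BB^{-1}$ is. For a linear relation $B \subseteq \mathcal{Y} \times \mathcal{Z}$, the product $BB^{-1} \subseteq \mathcal{Z} \times \mathcal{Z}$ consists of pairs $(z, z')$ such that there is $y$ with $(z,y) \in B^{-1}$ and $(y, z') \in B$, i.e. $(y,z) \in B$ and $(y, z') \in B$. So $(z,z') \in BB^{-1}$ iff $z, z' \in \operatorname{ran}(B)$ and $z - z' \in \operatorname{mul}(B)$ — in other words $BB^{-1}$ is the relation that is the identity on $\operatorname{ran}(B)$ modulo $\operatorname{mul}(B)$. This is the standard computation I'd expect.

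Now I would compute $B(B^{-1}A)$. A pair $(x, z')$ lies in $B(B^{-1}A)$ iff there is $z$ with $(x,z) \in A$ and $(z, z') \in BB^{-1}$. Using the description of $BB^{-1}$ above, this says $(x,z) \in A$ for some $z \in \operatorname{ran}(B)$, and $z' \in \operatorname{ran}(B)$ with $z - z' \in \operatorname{mul}(B)$.

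So the verification splits into two inclusions, and this is where the two conditions enter. For $A \subseteq B(B^{-1}A)$: take $(x,z) \in A$; I need $z \in \operatorname{ran}(B)$ so that the intermediate step is available, which is exactly $\operatorname{ran}(A) \subseteq \operatorname{ran}(B)$; then taking $z' = z$ gives $(x,z) \in B(B^{-1}A)$. For the reverse inclusion $B(B^{-1}A) \subseteq A$: take $(x, z')$ as above, coming from some $(x,z) \in A$ with $z - z' \in \operatorname{mul}(B)$; then $z' = z - m$ with $m \in \operatorname{mul}(B)$, and I want $(x, z') \in A$, i.e. $(x, z-m) = (x,z) - (0,m) \in A$; since $(x,z) \in A$ this needs $(0,m) \in A$, i.e. $m \in \operatorname{mul}(A)$ — which is guaranteed precisely by $\operatorname{mul}(B) \subseteq \operatorname{mul}(A)$. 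This pins down both conditions as exactly what is needed, and also shows the converse: if either inclusion is an equality forced by these set conditions, reversing the argument recovers the conditions (for necessity of $\operatorname{ran}(A)\subseteq\operatorname{ran}(B)$ note $\operatorname{ran}(A)=\operatorname{ran}(BC)\subseteq\operatorname{ran}(B)$ directly from the definition of a product).

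The remaining point is that $C = B^{-1}A$ is itself a range space relation, so that it is a legitimate solution in the stated set. This follows from the Remark in the preliminaries: $B^{-1}$ is a range space relation by part (1), and the product of two range space relations is again a range space relation by part (2) (here one must check the separability hypothesis is in force or otherwise available). I expect the main obstacle to be not the algebraic verification, which is routine, but rather organizing the necessity direction cleanly — in particular confirming that $\operatorname{mul}(B) \subseteq \operatorname{mul}(A)$ is forced, since the equality $A = B(B^{-1}A)$ must be exploited to extract exactly the multivalued-part containment rather than assuming it; I would derive it by feeding a pair $(0,m)$ with $m \in \operatorname{mul}(B)$ through the computation of $B(B^{-1}A)$ and using that this product equals $A$.
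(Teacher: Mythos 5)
Your proof is correct, but it takes a more self-contained route than the paper. The paper's entire proof is a citation: the algebraic equivalence ($A=B(B^{-1}A)$ holds if and only if $\mathrm{ran}(A)\subseteq \mathrm{ran}(B)$ and $\mathrm{mul}(B)\subseteq \mathrm{mul}(A)$) is taken as a black box from \cite[Corollary 2.4]{PS}, and the fact that $B^{-1}A$ lies in the class of range space relations comes from Remark \ref{rem}, parts (1) and (2). What you do instead is unfold that citation: your computation of $BB^{-1}$ as the relation acting as the identity on $\mathrm{ran}(B)$ modulo $\mathrm{mul}(B)$, combined with associativity of the relational product, yields a direct proof of sufficiency, and your necessity arguments ($\mathrm{ran}(A)=\mathrm{ran}(BC)\subseteq \mathrm{ran}(B)$; and, for $m\in \mathrm{mul}(B)$, the chain $(0,0)\in A$, $(0,0)\in B^{-1}$, $(0,m)\in B$ gives $(0,m)\in B(B^{-1}A)=A$) are sound; they in fact show the two conditions are necessary whenever $A=BX$ has any solution at all, which is the full strength of the Popovici--Sebesty\'{e}n result you are reproving. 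What your approach buys is independence from \cite{PS}, at the cost of length. Both proofs hinge on the same final point, namely that $B^{-1}A$ is itself a range space relation via Remark \ref{rem}, and you were right to flag the separability issue there: part (2) of that remark is stated only for separable Hilbert spaces, whereas Proposition \ref{P1} assumes no separability, so this hypothesis mismatch is a small gap inherited from the paper's own proof rather than one introduced by your argument.
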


\begin{proof}
It follows from \cite[Corollary 2.4]{PS} and Remark \ref{rem}.
\end{proof}

Let $A\subseteq $ $\mathcal{X}$ $\times $ $\mathcal{Z}$ and $B\subseteq $ $%
\mathcal{Y}\times \mathcal{Z}$ be two linear relations. If ran$\left(
A\right) \subseteq $ran$\left( B\right) $, then $B^{-1}A$ is a linear
operator if and only if mul$\left( A\right) \subseteq $mul$\left( B\right) $
and $\ker (B)=\{0\}$ \cite[Section3, p. 44]{PS}. Using the above results and
taking into account Proposition \ref{P1} we obtain the following corollary.

\begin{corollary}
Let $\mathcal{X},$ $\mathcal{Y}$ and $\mathcal{Z}$ be three Hilbert spaces
and let $A\subseteq $ $\mathcal{X}$ $\times $ $\mathcal{Z}$ and $B\subseteq $
$\mathcal{Y}\times \mathcal{Z}$ be two range space relations. Then $%
T=B^{-1}A $ is a range space operator solution of the equation $A=BX$ if and
only if ran$\left( A\right) \subseteq $ran$\left( B\right) $, mul$\left(
B\right) =$mul$\left( A\right) $ and $\ker (B)=\{0\}.$
\end{corollary}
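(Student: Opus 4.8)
The plan is to combine the two results already available in the excerpt: Proposition \ref{P1}, which characterizes when $B^{-1}A$ solves $A=BX$ within the class of range space relations, and the cited fact from \cite[Section 3, p.~44]{PS}, which tells us precisely when $B^{-1}A$ is single-valued. Recall that a range space operator is nothing but a range space relation that happens to be the graph of a linear operator, i.e. a range space relation $T$ with mul$(T)=\{0\}$. So the statement to be proved splits naturally into two requirements on $T=B^{-1}A$: that it be a range space relation solving $A=BX$, and that it be single-valued.

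For the forward implication, I would suppose $T=B^{-1}A$ is a range space operator solution of $A=BX$. In particular $T$ is then a range space relation solving the equation, so Proposition \ref{P1} yields ran$(A)\subseteq$ran$(B)$ and mul$(B)\subseteq$mul$(A)$. Since $T$ is also the graph of a linear operator and ran$(A)\subseteq$ran$(B)$ holds, the result from \cite[Section 3, p.~44]{PS} gives the reverse inclusion mul$(A)\subseteq$mul$(B)$ together with $\ker(B)=\{0\}$. Combining the two inclusions produces the equality mul$(B)=$mul$(A)$, and $\ker(B)=\{0\}$ completes the list of necessary conditions.

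For the converse, I would assume ran$(A)\subseteq$ran$(B)$, mul$(B)=$mul$(A)$ and $\ker(B)=\{0\}$. The equality gives in particular mul$(B)\subseteq$mul$(A)$, so Proposition \ref{P1} shows that $C=B^{-1}A$ is a solution of $A=BX$ in the class of range space relations. The equality also gives mul$(A)\subseteq$mul$(B)$, and together with ran$(A)\subseteq$ran$(B)$ and $\ker(B)=\{0\}$ the result from \cite[Section 3, p.~44]{PS} shows that $B^{-1}A$ is in fact a linear operator. A range space relation that is the graph of a linear operator is a range space operator, so $T=B^{-1}A$ is a range space operator solving $A=BX$, as required.

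The argument is essentially a bookkeeping merge of two earlier characterizations, so I do not expect a deep obstacle; the only point requiring care is the bidirectional nature of the multivalued-part condition. The inclusion mul$(B)\subseteq$mul$(A)$ comes from the solvability criterion of Proposition \ref{P1}, whereas the opposite inclusion mul$(A)\subseteq$mul$(B)$ comes from the single-valuedness criterion of \cite{PS}; only their conjunction yields the stated equality mul$(B)=$mul$(A)$, and one must be sure that the hypothesis ran$(A)\subseteq$ran$(B)$ is available in order to invoke the \cite{PS} criterion at all.
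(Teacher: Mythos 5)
Your proposal is correct and is exactly the paper's intended argument: the paper gives no explicit proof, stating only that the corollary follows by combining the single-valuedness criterion for $B^{-1}A$ from \cite[Section 3, p.~44]{PS} with Proposition \ref{P1}, which is precisely the merge you carry out. Your write-up simply makes explicit the bookkeeping (the two opposite inclusions of the multivalued parts) that the paper leaves implicit.
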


Let $\mathcal{X}$ and $\mathcal{Y}$ be two Hilbert spaces and $A\subseteq $ $%
\mathcal{X}$ $\times $ $\mathcal{Y}$ a range space relation. Then there is a
Hilbert space $\mathcal{Z}$ and a bounded linear operator $F:$ $\mathcal{%
Z\rightarrow X}$ $\times $ $\mathcal{Y}$ such that ran$\left( F\right) =A$.
We can suppose that $F$ is injective. Thus, $A\ $is a Hilbert space with the
inner product given by $\left\langle \left( x_{1},y_{1}\right) ,\left(
x_{2},y_{2}\right) \right\rangle _{A}=\left\langle h_{1},h_{2}\right\rangle $%
, where $\left( x_{i},y_{i}\right) =F\left( h_{i}\right) $, $i=1,2$.
Moreover, 
\begin{equation*}
\frac{1}{\sqrt{2}}\left( \left\Vert x\right\Vert +\left\Vert y\right\Vert
\right) \leq \left\Vert \left( x,y\right) \right\Vert _{A}
\end{equation*}%
for all $\left( x,y\right) \in A$. Then 
\begin{equation*}
J_{A}=\{\left( \left( x,y\right) ,x\right) ;\left( x,y\right) \in
A\}\subseteq \mathcal{X}\times \mathcal{Y\times X}
\end{equation*}%
is the graph of a linear operator with dom$\left( J_{A}\right) =A\ $and ran$%
\left( J_{A}\right) =$dom$\left( A\right) $.$\ $We remark that $\ker
(J_{A})=\{0\}\times $mul$(A)$. By \cite[Proposition 6.4.3]{Sa}, $%
J_{A}:A\rightarrow $dom$\left( A\right) ,$ $J_{A}\left( \left( x,y\right)
\right) =x$ is a surjective bounded linear operator, and if dom$\left(
A\right) $ is closed, $J_{A}$ is an open map.

\begin{remark}
Suppose that dom$(A)$ and mul$\left( A\right) $ are closed subspaces in $%
\mathcal{X}$, respectively $\mathcal{Y}$.$\ $Then, $A\subseteq $ $\mathcal{X}
$ $\times $ $\mathcal{Y}$ is closed \cite[Lemma 6.4.6]{Sa}, and 
\begin{equation*}
A=\left( \{0\}\times \text{mul}(A)\right) \oplus \left( A\cap \left( 
\mathcal{X}\times \text{mul}(A)^{\bot }\right) \right) .
\end{equation*}%
Therefore, the map $\widetilde{J_{A}}:$dom$\left( A\right) \rightarrow $ $A,$
given by $\widetilde{J_{A}}\left( x\right) =\left( x,y\right) $ with $y$ $%
\in $mul$\left( A\right) ^{\bot }$ is an injective bounded linear operator.
Moreover, $J_{A}\circ \widetilde{J_{A}}=$id$_{\text{dom}\left( A\right) }.$
\end{remark}

In the following theorem, under some hypotheses regarding $\ A$ and $B$, we
obtain a necessary and sufficient condition for the existence of a bounded
linear operator $T$ such that $A=BT.$

\begin{theorem}
\label{T1}Let $\mathcal{X},$ $\mathcal{Y}$ and $\mathcal{Z}$ be three
separable Hilbert spaces, $A\subseteq $ $\mathcal{X}$ $\times $ $\mathcal{Z}$
and $B\subseteq $ $\mathcal{Y}\times \mathcal{Z}$ two range space relations
such that dom$\left( A\right) $, ran$\left( B\right) $ and $\ker (B)$ are
closed. Then there is a bounded linear operator $T\subseteq \mathcal{X}%
\mathcal{\times }$ $\mathcal{Y}$ such that $A=BT$ if and only if ran$\left(
A\right) \subseteq $ran$\left( B\right) $ and mul$\left( B\right) =$mul$%
\left( A\right) .$
\end{theorem}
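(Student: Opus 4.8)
The plan is to prove the two directions separately, with essentially all of the work in the \emph{sufficiency} (``if'') direction. For \emph{necessity}, suppose $A=BT$ with $T$ a bounded linear operator. Then $\operatorname{ran}(A)\subseteq\operatorname{ran}(B)$ is immediate from the definition of the product, and $\operatorname{mul}(A)=\operatorname{mul}(B)$ follows from $T$ being single-valued with $T(0)=0$: the inclusion $\operatorname{mul}(B)\subseteq\operatorname{mul}(A)$ uses $(0,0)\in T$, while $\operatorname{mul}(A)\subseteq\operatorname{mul}(B)$ uses that any $y$ with $(0,y)\in T$ must vanish. This part needs neither closedness nor separability.

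For sufficiency the idea is first to build a \emph{bounded} linear selection of $B$ over its range, and then to transport it through $A$. First I would pass to $B^{-1}\subseteq\mathcal{Z}\times\mathcal{Y}$, which is again a range space relation with $\operatorname{dom}(B^{-1})=\operatorname{ran}(B)$ and $\operatorname{mul}(B^{-1})=\ker(B)$, both closed by hypothesis. Applying the preceding Remark to $B^{-1}$ then supplies the injective bounded operator $\widetilde{J_{B^{-1}}}\colon\operatorname{ran}(B)\to B^{-1}$ sending $z$ to $(z,y)$ with $y\in\ker(B)^{\perp}$; composing with the coordinate projection onto $\mathcal{Y}$ yields a bounded operator $S\colon\operatorname{ran}(B)\to\ker(B)^{\perp}$ with $(S(z),z)\in B$ for every $z\in\operatorname{ran}(B)$, where $S(z)$ is the unique $B$-preimage of $z$ lying in $\ker(B)^{\perp}$. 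In particular $S$ annihilates $\operatorname{mul}(B)$, which will be the decisive algebraic fact.

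Next I would define $T$ on $\operatorname{dom}(A)$ by $T(x)=S(z)$ for any $z$ with $(x,z)\in A$. Here the hypothesis $\operatorname{mul}(A)=\operatorname{mul}(B)$ guarantees well-definedness: two admissible choices $z_{1},z_{2}$ differ by an element of $\operatorname{mul}(A)=\operatorname{mul}(B)$, which $S$ kills. To obtain boundedness I would invoke the surjective bounded operator $J_{A}\colon A\to\operatorname{dom}(A)$; since $\operatorname{dom}(A)$ is closed, $J_{A}$ is open, and the factorization $T\circ J_{A}=S\circ\pi_{\mathcal{Z}}$ (with $\pi_{\mathcal{Z}}\colon A\to\mathcal{Z}$ the bounded coordinate projection, whose range sits inside $\operatorname{ran}(A)\subseteq\operatorname{ran}(B)=\operatorname{dom}(S)$) exhibits the bounded map $S\circ\pi_{\mathcal{Z}}$ as $T$ composed with an open surjection, forcing $T$ to be bounded; its graph is then a range space operator with closed domain $\operatorname{dom}(A)$.

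The verification $A=BT$ closes the argument. The inclusion $A\subseteq BT$ is immediate from $(S(z),z)\in B$, and for $BT\subseteq A$ one takes $(x,z)\in BT$, compares $(T(x),z)\in B$ with $(T(x),z_{0})\in B$ for a witness $(x,z_{0})\in A$, and uses $z-z_{0}\in\operatorname{mul}(B)=\operatorname{mul}(A)$ to conclude $(x,z)\in A$. I expect the two boundedness points to be the main obstacles: producing the \emph{bounded} selection $S$ (this is exactly what forces closedness of $\operatorname{ran}(B)$ and $\ker(B)$, via the Remark applied to $B^{-1}$), and passing from boundedness of $S\circ\pi_{\mathcal{Z}}$ to that of $T$ (where closedness of $\operatorname{dom}(A)$ and the open mapping theorem are used); by contrast the set-theoretic equalities are governed purely by $\operatorname{mul}(A)=\operatorname{mul}(B)$.
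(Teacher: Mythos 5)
Your proposal is correct, and it constructs literally the same operator as the paper: $T(x)=S(z)$, the unique element of $\ker(B)^{\perp}$ that $B$ maps onto any $z$ with $(x,z)\in A$, with boundedness ultimately resting on the continuity of the canonical selection $\widetilde{J_{B^{-1}}}$ of $B^{-1}$ --- which is exactly where closedness of $\operatorname{ran}(B)$ and $\ker(B)$ enter in the paper as well. The differences are in execution, and they are real. First, the paper does not build $T$ or verify $A=BT$ by hand: it cites the purely algebraic factorization theorem \cite[Theorem 5.3.1]{Sa} (essentially \cite[Theorem 1]{SS}) both for necessity and for the existence of an operator solution with $\operatorname{dom}(T)=\operatorname{dom}(A)$ and $\operatorname{ran}(T)\subseteq\ker(B)^{\perp}$, and only then proves that this $T$ is bounded; your treatment of the algebraic side (the selection $S$, well-definedness via $\operatorname{mul}(A)=\operatorname{mul}(B)$, and the two inclusions $A\subseteq BT$ and $BT\subseteq A$) is a correct self-contained replacement. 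Second, for boundedness the paper first observes that $\operatorname{mul}(A)=\operatorname{mul}(B)$ is closed (because $\operatorname{ran}(B)$ is closed), so the Remark yields a bounded selection $\widetilde{J_{A}}\colon\operatorname{dom}(A)\to A$, and then runs a sequential-continuity argument that amounts to writing $T$ as the composition $\mathrm{pr}_{\mathcal{Y}}\circ\widetilde{J_{B^{-1}}}\circ\mathrm{pr}_{\mathcal{Z}}\circ\widetilde{J_{A}}$ of bounded maps; you instead factor $T\circ J_{A}=S\circ\pi_{\mathcal{Z}}$ and use openness of the surjection $J_{A}$ (the open mapping theorem, available since $\operatorname{dom}(A)$ is closed) to force boundedness of $T$. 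Both routes are valid and use the same hypotheses; yours never needs closedness of $\operatorname{mul}(A)$ and avoids the selection $\widetilde{J_{A}}$ altogether, at the price of the (standard, and correctly invoked) lemma that boundedness of $T\circ J_{A}$ together with openness of $J_{A}$ implies boundedness of $T$, whereas the paper's route is a plain composition of continuous maps once the two selections are in hand.
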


\begin{proof}
$"\Rightarrow "$ It follows from \cite[Theorem 5.3.1]{Sa}.

$"\Leftarrow "\ $By \cite[Theorem 5.3.1]{Sa}, there is a graph relation $%
T\subseteq \mathcal{X}$ $\times $ $\mathcal{Y}$ such that $A=BT$. By the
proof of \cite[Theorem 5.3.1]{Sa}, dom$\left( T\right) =$dom$\left( A\right) 
$ and ran$\left( T\right) \subseteq $ $\mathcal{Y}_{B},$ where $\mathcal{Y}%
_{B}$ is the orthogonal complement of $\ker (B)$. Moreover, $\left(
x,y_{B}\right) \in T$ if and only if there is $z\in $ran$\left( A\right) $
such that $\left( y_{B},z\right) \in B$.

Since ran$\left( B\right) $ is closed, mul$\left( B\right) $ is closed \cite[%
Corollary 6.4.5]{Sa}, and since mul$\left( B\right) $ $=$ mul$\left(
A\right) $, mul$\left( A\right) $ is closed. Let $\mathcal{Z=}$mul$\left(
A\right) \oplus \mathcal{Z}_{A}$.

Since dom$\left( A\right) $ is closed, the map $\widetilde{J_{A}}:$dom$%
\left( A\right) \rightarrow A$ defined by $\widetilde{J_{A}}\left( x\right)
=\left( x,z\right) $ with $z\in \mathcal{Z}_{A}$, is a bounded linear
operator.

Let $\left\{ x_{n}\right\} _{n}$ $\subseteq $dom$\left( A\right) $ be a
sequence convergent to $x$. Since $\widetilde{J_{A}}\ $is continuous, the
sequence $\left\{ \widetilde{J_{A}}\left( x_{n}\right) \right\} _{n}\ $%
converges to $\widetilde{J_{A}}\left( x\right) $, and if $\widetilde{J_{A}}%
\left( x_{n}\right) =\left( x_{n},z_{nA}\right) $ and $\widetilde{J_{A}}%
\left( x\right) =\left( x,z_{A}\right) ,$ then the sequence $\left\{
z_{nA}\right\} _{n}$ converges to $z_{A}.$

On the other hand, since ran$\left( B\right) $ is closed, and $\ker (B)$ is
closed, the linear map $\widetilde{J_{B^{-1}}}:$ran$\left( B\right)
\rightarrow $ $B^{-1},$ $\widetilde{J_{B^{-1}}}\left( z\right) =\left(
z,y\right) \ $with $y\in \mathcal{Y}_{B},$ is continuous. Since ran$\left(
A\right) \subseteq $ran$\left( B\right) ,$ $z_{nA},z_{A}\in $ran$\left(
B\right) $ for all $n\in \mathbb{N}$. Therefore there is a sequence $%
\{y_{n}\}_{n}\subseteq $ dom$\left( B\right) $ and $y\in $dom$\left(
B\right) $ such that $\left( y_{n},z_{nA}\right) \in B$ for all $n\in 
\mathbb{N}$ and $(y,z_{A})\in B$. If for each $n\in \mathbb{N},$ $%
y_{n}=y_{nB}+y_{n\ker (B)}$ and $y=y_{B}+y_{\ker (B)}$, then $(y_{n\ker
(B)},0)\in B$ for all $n\in \mathbb{N}$ and $\left( y_{\ker \left( B\right)
},0\right) \in B.$ Moreover, 
\begin{equation*}
\left( y_{nB},z_{nA}\right) =\left( y_{n},z_{nA}\right) -\left( y_{n\ker
\left( B\right) },0\right) \in B
\end{equation*}%
and 
\begin{equation*}
\left( y_{B},z_{A}\right) =\left( y,z_{A}\right) -\left( y_{\ker \left(
B\right) },0\right) \in B.
\end{equation*}

Then $\widetilde{J_{B^{-1}}}\left( z_{A}\right) =\left( z_{A},y_{B}\right) $
and for each $n\in \mathbb{N}$ and $\widetilde{J_{B^{-1}}}\left(
z_{nA}\right) =\left( z_{nA},y_{nB}\right) $. Since the sequence $\left\{
z_{nA}\right\} _{n}$ converges to $z_{A}\ $and $\widetilde{J_{B^{-1}}}$ is
continuous, the sequence $\left\{ \left( z_{nA},y_{nB}\right) \right\} _{n}$
converges $\ $to $\left( z_{A},y_{B}\right) $ in $B^{-1}$, and so the
sequence $\left\{ y_{nB}\right\} _{n}$ converges to $y_{B}$. 

Thus, we showed that for each $n\in \mathbb{N},$ $\left( x_{n},y_{nB}\right)
\in T,\left( x,y_{B}\right) \in T$ and $y_{B}=\lim\limits_{n}y_{nB}$.
Therefore, $T$ is a bounded linear operator from dom$(T)$ to $\mathcal{Y}$.
\end{proof}

\begin{corollary}
Let $\mathcal{X},$ $\mathcal{Y}$ and $\mathcal{Z}$ be three separable
Hilbert spaces and let $A\subseteq $ $\mathcal{X}$ $\times $ $\mathcal{Z}$
and $B\subseteq $ $\mathcal{Y}\times \mathcal{Z}$ be two range space
operators such that dom$\left( A\right) $, ran$\left( B\right) $ and $\ker
(B)$ are closed. Then there is a bounded linear operator $T\subseteq 
\mathcal{X}\mathcal{\times }$ $\mathcal{Y}$ such that $A=BT$\ if and only if
ran$\left( A\right) \subseteq $ran$\left( B\right) .$
\end{corollary}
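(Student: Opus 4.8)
The plan is to obtain this corollary as an immediate specialization of Theorem \ref{T1}, exploiting the single-valuedness that is built into the term \emph{range space operator}. The key observation is that a range space operator is nothing but a range space relation that is also the graph of a linear operator; by the preliminary remarks, a linear relation $A$ is the graph of a linear operator if and only if $\text{mul}(A)=\{0\}$. Hence, since $A$ and $B$ are assumed to be range space operators, we automatically have $\text{mul}(A)=\{0\}$ and $\text{mul}(B)=\{0\}$, and in particular $A$ and $B$ are range space relations.

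First I would check that every standing hypothesis of Theorem \ref{T1} is already present: $\mathcal{X},\mathcal{Y},\mathcal{Z}$ are separable Hilbert spaces, $A\subseteq\mathcal{X}\times\mathcal{Z}$ and $B\subseteq\mathcal{Y}\times\mathcal{Z}$ are range space relations, and $\text{dom}(A)$, $\text{ran}(B)$, $\ker(B)$ are closed. All of these are exactly the assumptions of the corollary, so Theorem \ref{T1} applies without modification.

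Next I would invoke Theorem \ref{T1}, which states that a bounded linear operator $T$ with $A=BT$ exists if and only if $\text{ran}(A)\subseteq\text{ran}(B)$ and $\text{mul}(B)=\text{mul}(A)$. Because $\text{mul}(A)=\{0\}=\text{mul}(B)$ by the single-valuedness of $A$ and $B$, the condition $\text{mul}(B)=\text{mul}(A)$ reduces to the trivial equality $\{0\}=\{0\}$ and may therefore be discarded. What remains is $\text{ran}(A)\subseteq\text{ran}(B)$, which is precisely the asserted criterion, in both the necessity and the sufficiency direction.

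There is essentially no obstacle in this argument; the only point deserving care is the bookkeeping observation that the multivalued-part equality of Theorem \ref{T1} becomes vacuous once single-valuedness is assumed, so that the full strength of the theorem collapses onto the single range-inclusion condition stated in the corollary.
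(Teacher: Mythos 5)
Your proposal is correct and is exactly the argument the paper intends: the corollary is stated as an immediate consequence of Theorem \ref{T1}, obtained by noting that range space operators are range space relations with $\mathrm{mul}(A)=\mathrm{mul}(B)=\{0\}$, so the condition $\mathrm{mul}(B)=\mathrm{mul}(A)$ in Theorem \ref{T1} is automatic and only $\mathrm{ran}(A)\subseteq\mathrm{ran}(B)$ remains. The paper gives no separate proof, and your write-up supplies precisely the bookkeeping it leaves implicit.
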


The following result is \cite[Corollary 2.5]{PS} in the context of range
space relations.

\begin{proposition}
\label{P2} Let $\mathcal{X},$ $\mathcal{Y}$ and $\mathcal{Z}$ be three
Hilbert spaces and let $A\subseteq $ $\mathcal{X}$ $\times $ $\mathcal{Y}$
and $B\subseteq $ $\mathcal{X}\times \mathcal{Z}$ be two range space
relations. Then $C=AB^{-1}$ is a solution in the set of all the range space
relations of the equation $A=XB$ if and only if dom$\left( A\right)
\subseteq $dom$\left( B\right) $ and ker$\left( B\right) \subseteq \ker
\left( A\right) .$
\end{proposition}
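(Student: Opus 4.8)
The plan is to proceed exactly as in the proof of Proposition \ref{P1}, exploiting the symmetry between the two factorization equations $A=XB$ and $A=BX$ that is realised by passing to inverses. Taking inverses interchanges dom with ran and $\ker$ with mul, so the hypotheses dom$(A)\subseteq$dom$(B)$ and $\ker(B)\subseteq\ker(A)$ are precisely the mirror images of the conditions ran$(A)\subseteq$ran$(B)$ and mul$(B)\subseteq$mul$(A)$ appearing in Proposition \ref{P1}. This duality is what the sentence preceding the statement (``\cite[Corollary 2.5]{PS} in the context of range space relations'') is signalling, and it tells me the whole argument should reduce to one relation-theoretic input together with the permanence properties collected in Remark \ref{rem}.

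First I would check that $C=AB^{-1}$ is automatically a range space relation, independently of the two listed conditions. By Remark \ref{rem}(1), the inverse $B^{-1}\subseteq\mathcal{Z}\times\mathcal{X}$ of the range space relation $B$ is itself a range space relation. Since $A\subseteq\mathcal{X}\times\mathcal{Y}$ is a range space relation, the product $C=AB^{-1}\subseteq\mathcal{Z}\times\mathcal{Y}$ is a product of two range space relations, hence again a range space relation by Remark \ref{rem}(2). Thus belonging to the class of range space relations imposes no restriction on $C$, and the only remaining question is whether $C$ actually solves $A=XB$.

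Second I would invoke the purely relation-theoretic characterization \cite[Corollary 2.5]{PS}: for arbitrary linear relations, $C=AB^{-1}$ is a solution of $A=XB$ if and only if dom$(A)\subseteq$dom$(B)$ and $\ker(B)\subseteq\ker(A)$. Combining this with the first step yields both implications of the equivalence at once. For the forward direction, the two conditions follow from the solvability half of \cite[Corollary 2.5]{PS}; for the converse, the conditions guarantee via \cite[Corollary 2.5]{PS} that $C$ solves $A=XB$, while the first step guarantees that this same $C$ lies in the class of range space relations.

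The point that requires the most care is the application of Remark \ref{rem}(2) in the first step: that remark is stated for separable Hilbert spaces, whereas Proposition \ref{P2} as formulated does not assume separability. I would therefore either restrict attention to the separable setting, matching the hypotheses under which the product of range space relations is guaranteed to be a range space relation, or else verify directly that $AB^{-1}$ remains a range space relation by constructing a bounded operator onto it out of the defining operators of $A$ and $B^{-1}$. Apart from this bookkeeping around the product, the argument is the exact dual of Proposition \ref{P1} and introduces no genuinely new ideas.
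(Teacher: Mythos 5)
Your proposal is correct and follows exactly the paper's route: the paper's entire proof is ``It follows from \cite[Corollary 2.5]{PS} and Remark \ref{rem}'', which is precisely your two-step argument (permanence of range space relations under inverses and products, plus the relation-theoretic characterization from Popovici--Sebesty\'{e}n). Your observation about separability is a fair catch---Remark \ref{rem}(2) is indeed stated only for separable spaces while the proposition is not---but the paper itself glosses over this, so your treatment is if anything more careful than the original.
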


\begin{proof}
It follows from \cite[Corollary 2.5]{PS} and Remark \ref{rem}.
\end{proof}

Let $A\subseteq $ $\mathcal{X}$ $\times $ $\mathcal{Y}$ and $B\subseteq $ $%
\mathcal{X}\times \mathcal{Z}$ two range linear relations, then $AB^{-1}$ is
an operator if and only if dom$\left( A\right) \subseteq $dom$\left(
B\right) ,$ $\ker \left( B\right) \cap $dom$\left( A\right) \subseteq \ker
\left( A\right) $ and mul$(A)=\{0\}$ \cite[Section 4, p.46]{PS}.

Using the above results and taking into account Proposition \ref{P2} we
obtain the following corollary.

\begin{corollary}
Let $\mathcal{X},$ $\mathcal{Y}$ and $\mathcal{Z}$ be three Hilbert spaces, $%
A\subseteq $ $\mathcal{X}$ $\times $ $\mathcal{Y}$ and $B\subseteq $ $%
\mathcal{X}\times \mathcal{Z}$ two range space relations. Then $T=AB^{-1}$
is a range space operator solution of the equation $A=XB$ if and only if dom$%
\left( A\right) \subseteq $dom$\left( B\right) $, ker$\left( B\right)
\subseteq $ker$\left( A\right) $ and mul$(A)=\{0\}.$
\end{corollary}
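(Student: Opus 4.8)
The plan is to assemble this corollary from the three earlier results the paper has already established, since the statement is precisely a conjunction of conditions that those results characterize individually. First I would invoke Proposition~\ref{P2}, which tells us that $C=AB^{-1}$ is a range space relation solving $A=XB$ if and only if $\text{dom}(A)\subseteq\text{dom}(B)$ and $\ker(B)\subseteq\ker(A)$. This already accounts for two of the three conditions in the corollary and guarantees that $AB^{-1}$ is a genuine range space relation under those hypotheses, so the only remaining question is when that relation is actually a (single-valued) \emph{operator}.

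Next I would bring in the cited characterization from \cite[Section~4, p.~46]{PS}, stated in the excerpt just before the corollary: for range linear relations $A$ and $B$, the product $AB^{-1}$ is an operator if and only if $\text{dom}(A)\subseteq\text{dom}(B)$, $\ker(B)\cap\text{dom}(A)\subseteq\ker(A)$, and $\text{mul}(A)=\{0\}$. The key observation is that once we are already assuming $\text{dom}(A)\subseteq\text{dom}(B)$ and $\ker(B)\subseteq\ker(A)$ from Proposition~\ref{P2}, the intermediate condition $\ker(B)\cap\text{dom}(A)\subseteq\ker(A)$ becomes automatic, since $\ker(B)\cap\text{dom}(A)\subseteq\ker(B)\subseteq\ker(A)$. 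Thus the extra content needed to upgrade the range space relation $AB^{-1}$ to an operator is exactly the single-valuedness condition $\text{mul}(A)=\{0\}$.

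Putting these together gives the equivalence directly: under $\text{dom}(A)\subseteq\text{dom}(B)$ and $\ker(B)\subseteq\ker(A)$, the relation $C=AB^{-1}$ solves $A=XB$ in the range space relations (Proposition~\ref{P2}), and that relation is an operator precisely when $\text{mul}(A)=\{0\}$ (by the \cite{PS} criterion, with the middle inclusion supplied free of charge). Finally, one should note that an operator which is a range space relation is automatically a range space \emph{operator} in the sense defined in the introduction, so $T=AB^{-1}$ being a range space operator is equivalent to the conjunction of all three listed conditions.

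I do not expect a genuine obstacle here, as the corollary is a bookkeeping combination of quoted results rather than a fresh argument; the only point requiring care is the logical reconciliation of the two slightly different kernel conditions, namely checking that the weaker $\ker(B)\cap\text{dom}(A)\subseteq\ker(A)$ from \cite{PS} is subsumed by the stronger $\ker(B)\subseteq\ker(A)$ already in force, so that no hidden hypothesis is lost when merging the two characterizations. The proof can therefore be written in a single short paragraph citing Proposition~\ref{P2} together with \cite[Section~4, p.~46]{PS}.
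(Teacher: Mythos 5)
Your proposal is correct and is essentially the paper's own proof: the paper gives no argument beyond the sentence preceding the corollary ("Using the above results and taking into account Proposition \ref{P2}..."), i.e., it combines Proposition \ref{P2} with the quoted criterion from \cite[Section 4, p. 46]{PS} exactly as you do. The one piece of bookkeeping the paper leaves implicit, and which you rightly make explicit, is that the weaker condition $\ker (B)\cap $dom$(A)\subseteq \ker (A)$ appearing in the \cite{PS} criterion is automatically satisfied once $\ker (B)\subseteq \ker (A)$ is assumed.
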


In the following theorem, under some hypotheses regarding $\ A$ and $B$, we
obtain a necessary and sufficient condition for the existence of a bounded
linear operator $T$ such that $A=TB.$

\begin{theorem}
\label{T2}Let $\mathcal{X},$ $\mathcal{Y}$ and $\mathcal{Z}$ be three
separable Hilbert spaces, $A\subseteq $ $\mathcal{X}$ $\times $ $\mathcal{Y}$
and $B\subseteq $ $\mathcal{X}$ $\times $ $\mathcal{Z}$ two range space
relations such that dom$\left( A\right) ,$ mul$\left( A\right) ,$ ran$\left(
B\right) $ and $\ker (B)$ are closed. Then, there is a bounded linear
operator $T\subseteq \mathcal{Z\times }$ $\mathcal{Y}$ \ such that $A=TB$ if
and only if \ \ \ \ \ 

\begin{enumerate}
\item dom$\left( A\right) \subseteq $dom$\left( B\right) ;$

\item $\ker \left( B\right) \subseteq \ker \left( A\right) ;$

\item there is a range space operator $T_{\text{mul}}\subseteq $mul$\left(
B\right) \mathcal{\times }$mul$\left( A\right) $ such that dom$\left( T_{%
\text{mul}}\right) $ is a closed subspace of mul$\left( B\right) $ and ran$%
\left( T_{\text{mul}}\right) =$mul$\left( A\right) $.
\end{enumerate}
\end{theorem}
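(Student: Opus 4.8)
The plan is to prove Theorem~\ref{T2} by following the blueprint already established for Theorem~\ref{T1}, with the key structural difference being that $B$ now appears on the \emph{right}, so the relevant decomposition happens in the target space of $A$ rather than in the source. I would first dispatch the forward direction $"\Rightarrow"$ quickly: if $A=TB$ for a bounded linear operator $T$, then $C=AB^{-1}$ is a range space relation solving $A=XB$, so conditions (1) and (2) follow immediately from Proposition~\ref{P2} (dom$(A)\subseteq$dom$(B)$ and $\ker(B)\subseteq\ker(A)$). For condition (3), the multivalued part is governed by $T$: since $A=TB$, any $(0,y)\in A$ arises from some $(0,z)\in B$ with $(z,y)\in T$, so restricting $T$ to mul$(B)$ produces a range space operator $T_{\mathrm{mul}}\subseteq$mul$(B)\times$mul$(A)$ with the stated surjectivity onto mul$(A)$, and closedness of dom$(T_{\mathrm{mul}})$ comes from the closedness hypotheses on mul$(A)$ together with boundedness of $T$.

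For the converse $"\Leftarrow"$, the first move is to invoke \cite[Theorem 2]{SS}: conditions (1), (2) and the existence of a surjective linear operator from a subspace $\mathcal{D}\subseteq$mul$(B)$ onto mul$(A)$ guarantee that \emph{some} linear operator (graph relation) $T\subseteq\mathcal{Z}\times\mathcal{Y}$ with $A=TB$ exists. So the entire content of the converse is promoting this abstract $T$ to a \emph{bounded} operator. Here I would mimic the continuity argument of Theorem~\ref{T1} almost verbatim but with the roles of domain and range interchanged. Concretely, I would take a sequence $\{z_n\}_n$ in dom$(T)\subseteq$ran$(B)$ converging to $z$, lift each $z_n$ back through $B$ using the continuous section $\widetilde{J_{B^{-1}}}:$ran$(B)\rightarrow B^{-1}$ (available because ran$(B)$ and $\ker(B)$ are closed), apply the continuous map $\widetilde{J_A}:$dom$(A)\rightarrow A$ (available because dom$(A)$ and mul$(A)$ are closed, by the Remark following the $J_A$ construction), and then read off $T(z_n)=y_n$ and show $y_n\to T(z)$. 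The point is that both sections are bounded, and composing them realizes $T$ on the relevant closed subspaces as a composition of continuous maps, forcing $T$ itself to be continuous on dom$(T)$.

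The main obstacle I anticipate is handling the multivalued/mul part correctly so that $T$ remains \emph{single-valued and bounded simultaneously}. In Theorem~\ref{T1} the continuity argument worked cleanly because $T$ was built to have range in $\mathcal{Y}_B=\ker(B)^{\perp}$, eliminating the multivalued ambiguity of $B^{-1}$. For $A=TB$ the subtlety is the other way: $T$ must send mul$(B)$ onto mul$(A)$ via $T_{\mathrm{mul}}$, and the given hypothesis is only that $T_{\mathrm{mul}}$ is a range space operator with \emph{closed} domain inside mul$(B)$ and range exactly mul$(A)$. So I would decompose $\mathcal{Z}=$mul$(B)\oplus\mathcal{Z}'$ and analyze $T$ separately on the two pieces: on $\mathcal{Z}'$ (the "single-valued'' directions coming from dom$(A)\subseteq$dom$(B)$ with $\ker(B)\subseteq\ker(A)$) continuity follows from the section argument above, while on mul$(B)$ continuity is exactly the hypothesis that $T_{\mathrm{mul}}$ is a bounded (range space, with closed domain) operator. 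The delicate bookkeeping is checking that these two pieces glue to a genuinely single-valued bounded $T$ on dom$(T)$, i.e.\ that the lift $\widetilde{J_{B^{-1}}}$ does not reintroduce multivalued contributions and that the closedness of dom$(T_{\mathrm{mul}})$ is precisely what prevents the mul-part from destroying boundedness; this gluing, rather than either continuity estimate individually, is where the argument must be made carefully.
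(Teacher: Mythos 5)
Your toolkit is the right one and largely matches the paper's proof: decompose $\mathcal{Y}=\text{mul}\left( A\right) \oplus \mathcal{Y}_{A}$ and $\mathcal{Z}=\text{mul}\left( B\right) \oplus \mathcal{Z}_{B}$, control the operator part via the two continuous sections $\widetilde{J_{B^{-1}}}$ and $\widetilde{J_{A}}$, and assemble the final operator as a direct sum with $T_{\text{mul}}$. However, the logical skeleton of your converse has a genuine flaw: you fix \emph{the} operator $T$ produced by \cite[Theorem 2]{SS} and claim "the entire content of the converse is promoting this abstract $T$ to a bounded operator," reading off $T(z_{n})=y_{n}$ from the sections. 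This cannot work, because the solution of $A=XB$ in the class of linear operators is not canonical and need not be bounded. Indeed, if $T$ is any solution and $L:\text{dom}\left( T\right) \rightarrow \text{mul}\left( A\right) $ is any linear map vanishing on $\text{mul}\left( B\right) \cap \text{dom}\left( T\right) $, then $T+L$ is again a solution: given $y=Tz$ with $\left( x,z\right) \in B$, surjectivity of $T_{\text{mul}}$ gives $m\in \text{mul}\left( B\right) \cap \text{dom}\left( T\right) $ with $Tm=-Lz$, and then $\left( x,z+m\right) \in B$ and $\left( T+L\right) \left( z+m\right) =y$. Such an $L$ can be chosen unbounded whenever $\text{mul}\left( A\right) \neq \{0\}$ and $\text{dom}\left( T\right) $ is infinite dimensional modulo $\text{mul}\left( B\right) \cap \text{dom}\left( T\right) $. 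For the same reason, "read off $T(z_{n})=y_{n}$" is false for an abstract solution: your sections compute the $\mathcal{Y}_{A}$-component, whereas $T(z_{n})$ may differ from it by an arbitrary element of $\text{mul}\left( A\right) $.

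The repair, which is exactly what the paper does, is to abandon the abstract $T$ and \emph{construct} a specific solution whose values are forced into $\mathcal{Y}_{A}$. Define
\begin{equation*}
T_{0}=\{\left( z_{B},y_{A}\right) \in \mathcal{Z}_{B}\times \mathcal{Y}_{A};\left( \exists \right) x\in \mathcal{X}\text{ such that }\left( x,y_{A}\right) \in A\text{ and }\left( x,z_{B}\right) \in B\},
\end{equation*}
check it is single valued (this uses $\ker \left( B\right) \subseteq \ker \left( A\right) $ together with $\text{mul}\left( A\right) \cap \mathcal{Y}_{A}=\{0\}$), and then run your section argument on $T_{0}$ itself; it is valid there precisely because $T_{0}$ takes values in $\mathcal{Y}_{A}$, so the composition of $\widetilde{J_{B^{-1}}}$ and $\widetilde{J_{A}}$ genuinely reproduces $T_{0}$, after verifying (as the paper does) that the lifted points $x_{n}\in \ker \left( B\right) ^{\bot }$ coincide with the witnesses $x_{nB}$ of $z_{nB}\in \text{dom}\left( T_{0}\right) $ and that the second components of $\widetilde{J_{A}}\left( x_{nB}\right) $ equal the $y_{nA}$. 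This shows $\text{dom}\left( T_{0}\right) $ is closed and $T_{0}$ is bounded, and then $T:=T_{0}\oplus T_{\text{mul}}$ is a bounded operator with $A=TB$. A smaller point on your forward direction: closedness of $\text{dom}\left( T_{\text{mul}}\right) =\text{mul}\left( B\right) \cap \text{dom}\left( T\right) $ comes from closedness of $\text{mul}\left( B\right) $ (a consequence of $\text{ran}\left( B\right) $ being closed) together with closedness of $\text{dom}\left( T\right) $, not from closedness of $\text{mul}\left( A\right) $ and boundedness of $T$; a bounded operator can perfectly well have non-closed domain.
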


\begin{proof}
$"\Rightarrow "$ From \cite[Theorem 5.3.2]{Sa} it follows that dom$\left(
A\right) \subseteq $dom$\left( B\right) $ and $\ \ker \left( B\right) $ $%
\subseteq \ker \left( A\right) $ and there is a surjective linear operator $%
T_{\text{mul}}:$mul$\left( B\right) \cap $dom$\left( T\right) \rightarrow $%
mul$\left( A\right) .$ Moreover, $T_{\text{mul }}=\left. T\right\vert _{%
\text{mul}\left( B\right) \cap \text{dom}\left( T\right) }$. Since ran$(B)$
is closed, mul$\left( B\right) $\ is closed \cite[Corollary 6.4.5]{Sa}, and
then dom$\left( T_{\text{mul }}\right) =$mul$\left( B\right) \cap $dom$%
\left( T\right) $ is a closed subspace of $\mathcal{Z}$.$\ $Since $T$:dom$%
\left( T\right) \rightarrow \mathcal{Y}$ is a bounded linear operator, $T_{%
\text{mul}}:$mul$\left( B\right) \cap $dom$\left( T\right) $ $\rightarrow $%
mul$\left( A\right) $ is a bounded linear operator. Therefore, $T_{\text{mul}%
}\subseteq $mul$\left( B\right) \mathcal{\times }$mul$\left( A\right) $ is a
range space operator with dom$\left( T_{\text{mul}}\right) $ a closed
subspace of mul$\left( B\right) $ and ran$\left( T_{\text{mul}}\right) $ $=$%
mul$\left( A\right) $.

$"\Leftarrow "$ Since mul$\left( A\right) $ and mul$\left( B\right) $ are
closed subspaces, $\mathcal{Y}=$mul$\left( A\right) \oplus \mathcal{Y}_{A}$
and $\mathcal{Z}=$mul$\left( B\right) \oplus \mathcal{Z}_{B},$ where $%
\mathcal{Y}_{A}=$mul$\left( A\right) ^{\perp }$ and $\mathcal{Z}_{B}=$mul$%
\left( B\right) ^{\perp }$. By the proof of \cite[Theorem 5.3.2]{Sa}, 
\begin{equation*}
T_{0}=\{\left( z_{B},y_{A}\right) ;\left( \exists \right) x\in \mathcal{X\ }%
\text{such that }\left( x,y_{A}\right) \in A,\left( x,z_{B}\right) \in
B\}\subseteq \mathcal{Z}_{B}\times \mathcal{Y}_{A}
\end{equation*}%
is the graph of a linear operator.

We remark that $\left( z_{B},y_{A}\right) \in T_{0}$ if and only if there is 
$x_{B}\in \ker \left( B\right) ^{\perp }$such that $\left(
x_{B},y_{A}\right) \in A$ and $\left( x_{B},z_{B}\right) \in B$.

Indeed, since $\ker (B)$ is closed, $\mathcal{X=}\ker \left( B\right) \oplus
\ker \left( B\right) ^{\perp }$.

Let $x=x_{0}+x_{B}$\ with $x_{0}\in \ker (B)$ and $x_{B}\in \ker \left(
B\right) ^{\perp }$ such that $\left( x,y_{A}\right) \in A$ and $\left(
x,z_{B}\right) \in B$. Since $\ker \left( B\right) \subseteq \ker \left(
A\right) $, $\left( x_{0},0\right) \in A$ and we deduce that 
\begin{equation*}
\left( x_{B},y_{A}\right) =\left( x,y_{A}\right) -\left( x_{0},0\right) \in A
\end{equation*}%
and 
\begin{equation*}
\left( x_{B},z_{B}\right) =\left( x,z_{B}\right) -\left( x_{0},0\right) \in
B.
\end{equation*}%
Conversely, if $\left( x_{B},y_{A}\right) \in A$ and $\left(
x_{B},z_{B}\right) \in B$, then, clearly, $\left( x,y_{A}\right) \in A$ and $%
\left( x,z_{B}\right) \in B$.

Clearly, dom$\left( T_{0}\right) \subseteq $ran$\left( B\right) $. Let $%
\left\{ z_{nB}\right\} _{n}$ be a sequence of elements in dom$(T_{0})$ which
converges to $z_{B}$. Since ran$\left( B\right) $ is closed, $z_{B}\in $ran$%
\left( B\right) $, and since dom$\left( B^{-1}\right) $ $=$ran$(B),$ the
linear map $\widetilde{J_{B^{-1}}}:$ran$\left( B\right) \rightarrow B^{-1}\ $%
given by $\widetilde{J_{B^{-1}}}\left( z\right) =\left( z,x\right) ,\ $with $%
x\in $mul$\left( B^{-1}\right) ^{\bot }=\ker \left( B\right) ^{\bot },$ is
continuous.

For each $n\in \mathbb{N}$, let $x_{n}\in \ker \left( B\right) ^{\bot }$
such that $\widetilde{J_{B^{-1}}}\left( z_{nB}\right) =\left(
z_{nB},x_{n}\right) $. Then the sequence $\left\{ \left(
z_{_{n}B},x_{n}\right) \right\} _{n}$ $\subseteq B^{-1}$ converges to $%
\left( z_{B},x\right) \ \ $in $B^{-1}$, and so the sequence $\left\{
x_{n}\right\} _{n}$ converges to $x$.

On the other hand, since $\left\{ z_{nB}\right\} _{n}$ is a sequence of
elements in dom$(T_{0})$, there is a sequence $\left\{ x_{nB}\right\} _{n} $
in $\ker \left( B\right) ^{\perp }$ and a sequence $\left\{ y_{nA}\right\}
_{n}\ $in $\mathcal{Y}_{A}$ such that for all $n\in \mathbb{N},$ $\left(
x_{nB},z_{nB}\right) \in B$ and $\left( x_{nB},y_{nA}\right) \in A $. From 
\begin{equation*}
\left( x_{n},z_{nB}\right) -\left( x_{nB},z_{nB}\right) =\left(
x_{n}-x_{nB},0\right) \in B\ \text{for all }n\in \mathbb{N}
\end{equation*}%
we conclude that, for each $n\in \mathbb{N},x_{n}-x_{nB}\in \ker \left(
B\right) $, but $x_{n}-x_{nB}\in \ker (B)^{\bot },\ $and so $x_{n}=x_{nB}$ .
Therefore, the sequence $\left\{ x_{nB}\right\} _{n}$ converges to $x,$ and
since dom$\left( A\right) $ is closed,$\ x\in $dom$\left( A\right) $.

Since dom$\left( A\right) \ $is closed, the linear map $\widetilde{J_{A}}:$%
dom$\left( A\right) \rightarrow A$ defined by $\widetilde{J_{A}}\left(
x\right) =\left( x,y\right) ,\ $with $y\in \mathcal{Y}_{A},$ is continuos.
For each $n\in \mathbb{N}$, let $y_{n}\in \mathcal{Y}_{A}\ $such\ that $%
\widetilde{J_{A}}\left( x_{nB}\right) =\left( x_{nB},y_{n}\right) $. Then
the sequence $\left\{ \left( x_{nB},y_{_{n}}\right) \right\} _{n}$ converges
to $\left( x_{B},y\right) $ in $A$, and so the sequence $\left\{
y_{n}\right\} _{n}\ \ $converges to $y$. From 
\begin{equation*}
\left( 0,y_{_{n}}-y_{nA}\right) =\left( x_{nB},y_{n}\right) -\left(
x_{nB},y_{nA}\right) \in A\text{\ for all }n\in \mathbb{N}
\end{equation*}%
we conclude that for each $n\in \mathbb{N}$, $y_{_{n}}-y_{nA}\in $mul$(A)$,
but $y_{_{n}}-y_{nA}\in $mul$\left( A\right) ^{\bot }$, and so $%
y_{_{n}}=y_{nA}$. Therefore, the sequence $\left\{ y_{nA}\right\} _{n}\ \ $%
converges to $y$.

Thus, we showed that there is $x\in \ker \left( B\right) ^{\bot }$ and $y\in 
\mathcal{Y}_{A}$ such that $\left( x,z_{B}\right) \in B$ and $\left(
x,y\right) \in A$. Therefore, $\left( z_{B},y\right) $ $\in T_{0}$. This
means that dom$(T_{0})$ is closed and $T_{0}$ is a bounded linear operator
from dom$(T_{0})$ to $\mathcal{Y}_{A}$. Therefore, $T_{0}\subseteq \mathcal{Z%
}_{B}\times \mathcal{Y}_{A}$ is a bounded linear operator.

Then $T=T_{0}\oplus T_{\text{mul }}\subseteq \mathcal{Z\times Y}$ is a
bounded linear operator. Moreover, $A=BT$ (see\ the proof of \cite[Theorem
5.3.2]{Sa}).
\end{proof}

\begin{corollary}
Let $\mathcal{X},$ $\mathcal{Y}$ and $\mathcal{Z}$ be three separable
Hilbert spaces and let $A\subseteq $ $\mathcal{X}$ $\times $ $\mathcal{Y}$
and $B\subseteq $ $\mathcal{X}$ $\times $ $\mathcal{Z}$ be two range space
operators such that dom$\left( A\right) ,$ ran$\left( B\right) $ and $\ker
(B)$ are closed. Then there is a bounded linear operator $T\subseteq 
\mathcal{Z}$ $\times $ $\mathcal{Y}$ such that $A=TB$ if and only if dom$%
\left( A\right) \subseteq $dom$\left( B\right) $ and $\ker \left( B\right)
\subseteq \ker \left( A\right) $.
\end{corollary}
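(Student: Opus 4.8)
The plan is to deduce this Corollary directly from Theorem \ref{T2} by specializing it to the case in which $A$ and $B$ are genuine single-valued operators. Recall that a linear relation is the graph of a linear operator precisely when its multi-valued part is trivial; since $A$ and $B$ are range space operators, we have $\text{mul}(A)=\{0\}$ and $\text{mul}(B)=\{0\}$. This is the single observation that drives the whole argument.

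First I would check that the hypotheses of Theorem \ref{T2} are satisfied. By assumption $\text{dom}(A)$, $\text{ran}(B)$ and $\ker(B)$ are closed, and since $\text{mul}(A)=\{0\}$ is trivially closed, all four closedness requirements of Theorem \ref{T2} hold. Hence Theorem \ref{T2} applies verbatim, and the existence of a bounded linear operator $T\subseteq\mathcal{Z}\times\mathcal{Y}$ with $A=TB$ is equivalent to the conjunction of its three listed conditions.

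Next I would show that condition (3) of Theorem \ref{T2} holds automatically, so that only conditions (1) and (2) remain. Since $\text{mul}(B)=\{0\}$ and $\text{mul}(A)=\{0\}$, the relation $T_{\text{mul}}=\{(0,0)\}\subseteq\text{mul}(B)\times\text{mul}(A)$ is a range space operator (it is the graph of the zero map on the zero space), its domain $\{0\}$ is a closed subspace of $\text{mul}(B)=\{0\}$, and its range is $\{0\}=\text{mul}(A)$. Therefore condition (3) is met unconditionally, and the characterization of Theorem \ref{T2} collapses to the assertion that $T$ exists if and only if $\text{dom}(A)\subseteq\text{dom}(B)$ and $\ker(B)\subseteq\ker(A)$, which is precisely the statement to be proved. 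Both implications of the biconditional are inherited from Theorem \ref{T2}, so nothing beyond this reduction is needed.

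There is no genuine obstacle here, as the result is a routine specialization, entirely parallel to the way the corollary following Theorem \ref{T1} specializes that theorem. The only point deserving a moment's care is the observation that condition (3) of Theorem \ref{T2}, which in the general relational setting encodes nontrivial data about how the multi-valued parts of $A$ and $B$ are matched by $T$, degenerates completely once both $A$ and $B$ are single-valued; the zero operator on the zero space serves as the required witness $T_{\text{mul}}$.
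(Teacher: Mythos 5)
Your proposal is correct and matches the paper's intended argument: the corollary is stated there without proof precisely because it is the immediate specialization of Theorem \ref{T2} to single-valued $A$ and $B$, where $\mathrm{mul}(A)=\mathrm{mul}(B)=\{0\}$ makes the closedness of $\mathrm{mul}(A)$ automatic and lets $T_{\mathrm{mul}}=\{(0,0)\}$ witness condition (3). Your write-up simply makes this reduction explicit, exactly as the analogous corollary to Theorem \ref{T1} is obtained.
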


Let $\mathcal{X}$ be a Hilbert space and let $A\subseteq $ $\mathcal{X}$ $%
\times $ $\mathcal{X}$ be a linear relation. The adjoint of $A$ is the
linear relation defined by 
\begin{equation*}
A^{\ast }=\{\left( x,y\right) \in \mathcal{X}\times \mathcal{X};\left\langle
y,u\right\rangle =\left\langle x,v\right\rangle \text{ for all }\left(
u,v\right) \in A\}.
\end{equation*}%
The subspace $A^{\ast \ }$is closed and so $A^{\ast \ }$is a range space
relation. Moreover,%
\begin{equation*}
\text{mul}\left( A^{\ast }\right) =\text{dom}\left( A\right) ^{\bot }\text{
and }\ker \left( A^{\ast }\right) =\text{ran}\left( A\right) ^{\bot }.
\end{equation*}%
Therefore, mul$\left( A^{\ast }\right) $and $\ker \left( A^{\ast }\right) $
are closed.

A linear relation $A\subseteq \mathcal{X}$ $\times $ $\mathcal{X}$ is
self-adjoint if $A=A^{\ast }$ \cite[p.29]{SAN}.

\begin{corollary}
Let $\mathcal{X}$ be a separable Hilbert space and let $A\subseteq $ $%
\mathcal{X}$ $\times $ $\mathcal{X}$ and $B\subseteq $ $\mathcal{X}$ $\times 
$ $\mathcal{X}$ be two self-adjoint linear relations such that dom$\left(
A\right) \ $and ran$\left( B\right) $ are closed. Then:

\begin{enumerate}
\item There is a bounded linear operator $T\subseteq \mathcal{X}$ $\times $ $%
\mathcal{X}$ such that $A=BT$ if and only if ran$\left( A\right) \subseteq $%
ran$\left( B\right) $ and mul$\left( B\right) =$mul$\left( A\right) .$

\item There is a bounded linear operator $T\subseteq \mathcal{X}$ $\times $ $%
\mathcal{X}$ such that $A=TB$ if and only if dom$\left( A\right) \subseteq $%
dom$\left( B\right) $, $\ker \left( B\right) \subseteq \ker \left( A\right) $
and there is a range space operator $T_{\text{mul}}\subseteq $mul$\left(
B\right) \mathcal{\times }$mul$\left( A\right) $ such that dom$\left( T_{%
\text{mul}}\right) $ is a closed subspace in mul$\left( B\right) $ and ran$%
\left( T_{\text{mul}}\right) =$mul$\left( A\right) $.
\end{enumerate}
\end{corollary}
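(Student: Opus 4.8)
The plan is to obtain this corollary as an immediate application of Theorems \ref{T1} and \ref{T2}, the only real work being to check that the self-adjointness hypothesis automatically supplies the closedness conditions on $\ker(B)$ and mul$(A)$ that are \emph{not} listed among the assumptions. The guiding observation is recorded just before the statement: the adjoint of any linear relation in $\mathcal{X}\times\mathcal{X}$ is a closed subspace, and it satisfies $\text{mul}(C^{\ast})=\text{dom}(C)^{\bot}$ and $\ker(C^{\ast})=\text{ran}(C)^{\bot}$. Since $A=A^{\ast}$ and $B=B^{\ast}$, both $A$ and $B$ are closed subspaces of $\mathcal{X}\times\mathcal{X}$, hence range space relations, so the range space hypothesis of both theorems is met; moreover $\mathcal{X}$ is separable by assumption, and throughout we take $\mathcal{X}=\mathcal{Y}=\mathcal{Z}$ in the statements of Theorems \ref{T1} and \ref{T2}.

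For part (1) I would apply Theorem \ref{T1}. Its hypotheses require dom$(A)$, ran$(B)$ and $\ker(B)$ to be closed. The first two are assumed. For the third, self-adjointness of $B$ gives $\ker(B)=\ker(B^{\ast})=\text{ran}(B)^{\bot}$, which is closed as an orthogonal complement (and is in fact closed regardless of whether ran$(B)$ is). Thus all hypotheses of Theorem \ref{T1} hold, and it yields exactly the equivalence of the existence of a bounded $T$ with $A=BT$ to the conjunction of ran$(A)\subseteq$ran$(B)$ and mul$(B)=$mul$(A)$, which is the assertion of (1).

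For part (2) I would apply Theorem \ref{T2}, whose hypotheses require dom$(A)$, mul$(A)$, ran$(B)$ and $\ker(B)$ to be closed. Here dom$(A)$ and ran$(B)$ are assumed, and $\ker(B)=\text{ran}(B)^{\bot}$ is closed as in part (1). The remaining condition is on mul$(A)$, and this is where self-adjointness of $A$ enters: $\text{mul}(A)=\text{mul}(A^{\ast})=\text{dom}(A)^{\bot}$, again closed as an orthogonal complement. With all four closedness conditions verified, Theorem \ref{T2} applies and delivers the stated characterization via conditions (1)--(3), including the range space operator $T_{\text{mul}}$ with closed domain in mul$(B)$ and ran$(T_{\text{mul}})=$mul$(A)$.

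There is no genuine obstacle in the argument; its entire content is the bookkeeping of hypotheses. The step most prone to slips is pairing the two adjoint formulas with the correct relation and the correct closedness claim, namely using $\ker(B^{\ast})=\text{ran}(B)^{\bot}$ to close $\ker(B)$ and $\text{mul}(A^{\ast})=\text{dom}(A)^{\bot}$ to close mul$(A)$, together with keeping the identification of the three ambient spaces consistent with the roles they play in each theorem.
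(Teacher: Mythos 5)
Your proposal is correct and follows essentially the same route as the paper: both arguments reduce the corollary to Theorems \ref{T1} and \ref{T2} by observing that self-adjoint relations are closed (hence range space relations) and that the remaining closedness hypotheses, on $\ker(B)$ and $\mathrm{mul}(A)$, come for free from self-adjointness. The only cosmetic difference is that the paper cites a reference for the closedness of $\ker$ and $\mathrm{mul}$ of self-adjoint relations, whereas you derive it directly from the identities $\ker(B^{\ast})=\mathrm{ran}(B)^{\bot}$ and $\mathrm{mul}(A^{\ast})=\mathrm{dom}(A)^{\bot}$ stated earlier in the paper.
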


\begin{proof}
Since $A$ and $B$ are self-adjoint linear relations, it follows that $A$ and 
$B$ are closed in $\mathcal{X}$ $\times $ $\mathcal{X}$ \cite{A, SAN}, and
so $A$ and $B$ are range space relations. Also, since $A$ and $B$ are
self-adjoint linear relations, it follows that $\ker (A)$, mul$(A)$, $\ker
(B)$ and mul$(B)$ are closed subspace in $\mathcal{X}$ \cite[p.29]{SAN}.
Then the statement $(1)$ follows from Theorem \ref{T1} and the statement $%
(2) $ follows from Theorem \ref{T2}.
\end{proof}

\begin{proposition}
\label{P21}Let $\mathcal{X}$ be a separable Hilbert space and let $%
A\subseteq $ $\mathcal{X}$ $\times $ $\mathcal{X}$ and $B\subseteq $ $%
\mathcal{X}$ $\times $ $\mathcal{X}$ be two range space relations such that
dom$\left( A\right) \ $and ran$\left( B\right) $ are closed. Then:

\begin{enumerate}
\item There is a bounded linear operator $T\subseteq \mathcal{X}$ $\times $ $%
\mathcal{X}$ such that $A^{\ast }=B^{\ast }T$ if and only if $\ker \left(
B\right) \subseteq \ker \left( A\right) $ and dom$\left( A\right) =\overline{%
\text{dom}\left( B\right) }.$

\item There is a bounded linear operator $T\subseteq \mathcal{X}$ $\times $ $%
\mathcal{X}$ such that $A^{\ast }=TB^{\ast }$ if and only if mul$\left(
B\right) \subseteq \overline{\text{mul}\left( A\right) },$ $\overline{\text{%
ran}\left( A\right) }\subseteq $ran$\left( B\right) $ and there is a range
space operator $T_{0}\subseteq $dom$\left( B\right) ^{\bot }\mathcal{\times }
$dom$\left( A\right) ^{\bot }$ such that dom$\left( T_{0}\right) $ is a
closed subspace of dom$\left( B\right) ^{\bot }$ and ran$\left( T_{0}\right)
=$dom$\left( A\right) ^{\bot }$.
\end{enumerate}
\end{proposition}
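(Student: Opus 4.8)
The plan is to reduce both assertions to the two factorization theorems already established, applied to the adjoint pair $(A^{\ast},B^{\ast})$ rather than to $(A,B)$, and then to translate every hypothesis and conclusion back in terms of $A$ and $B$ through the adjoint identities. Since $A^{\ast}$ and $B^{\ast}$ are closed, they are range space relations, so Theorems \ref{T1} and \ref{T2} are available for $(A^{\ast},B^{\ast})$ once their closedness hypotheses are checked: assertion $(1)$ is the instance $A^{\ast}=B^{\ast}T$ of Theorem \ref{T1}, and assertion $(2)$ is the instance $A^{\ast}=TB^{\ast}$ of Theorem \ref{T2}. The dictionary I would use consists of the two identities recorded just before the statement, $\text{mul}(C^{\ast})=\text{dom}(C)^{\bot}$ and $\ker(C^{\ast})=\text{ran}(C)^{\bot}$, together with their duals obtained from $C^{\ast\ast}=\overline{C}$, namely $\text{dom}(C^{\ast})^{\bot}=\text{mul}(\overline{C})$ and $\text{ran}(C^{\ast})^{\bot}=\ker(\overline{C})$.

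The first real task is to verify that $(A^{\ast},B^{\ast})$ meets the closedness requirements of the two theorems. For Theorem \ref{T1} I need $\text{dom}(A^{\ast})$, $\text{ran}(B^{\ast})$ and $\ker(B^{\ast})$ closed, and for Theorem \ref{T2} also $\text{mul}(A^{\ast})$ closed. Now $\ker(B^{\ast})=\text{ran}(B)^{\bot}$ and $\text{mul}(A^{\ast})=\text{dom}(A)^{\bot}$ are closed automatically. The remaining two spaces are $\text{ran}(B^{\ast})$ and $\text{dom}(A^{\ast})=\text{ran}((A^{\ast})^{-1})=\text{ran}((A^{-1})^{\ast})$; I would obtain their closedness from the closed range theorem for linear relations, applied to $B$ and to the range space relation $A^{-1}$, whose ranges $\text{ran}(B)$ and $\text{ran}(A^{-1})=\text{dom}(A)$ are closed by hypothesis (passing to the closure does not enlarge these ranges). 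Establishing this closed range step, where separability and the range space machinery of \cite{Sa} enter, is the part I expect to be the main obstacle; what follows is essentially bookkeeping.

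With the theorems in force, I would simply read off and translate their conditions. For $(1)$, Theorem \ref{T1} yields $A^{\ast}=B^{\ast}T$ iff $\text{ran}(A^{\ast})\subseteq\text{ran}(B^{\ast})$ and $\text{mul}(B^{\ast})=\text{mul}(A^{\ast})$. The equality $\text{mul}(B^{\ast})=\text{mul}(A^{\ast})$ becomes $\text{dom}(B)^{\bot}=\text{dom}(A)^{\bot}$, i.e. $\overline{\text{dom}(B)}=\overline{\text{dom}(A)}=\text{dom}(A)$ since $\text{dom}(A)$ is closed, which is exactly $\text{dom}(A)=\overline{\text{dom}(B)}$. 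The inclusion $\text{ran}(A^{\ast})\subseteq\text{ran}(B^{\ast})$, after taking orthogonal complements and using $\text{ran}(C^{\ast})^{\bot}=\ker(\overline{C})$ together with the closedness of $\text{ran}(B^{\ast})$, becomes the kernel inclusion, which under the standing hypotheses is the asserted $\ker(B)\subseteq\ker(A)$.

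For $(2)$, Theorem \ref{T2} produces the three conditions $\text{dom}(A^{\ast})\subseteq\text{dom}(B^{\ast})$, $\ker(B^{\ast})\subseteq\ker(A^{\ast})$, and the existence of a range space operator $T_{\text{mul}}\subseteq\text{mul}(B^{\ast})\times\text{mul}(A^{\ast})$ with $\text{dom}(T_{\text{mul}})$ closed in $\text{mul}(B^{\ast})$ and $\text{ran}(T_{\text{mul}})=\text{mul}(A^{\ast})$. Using $\ker(C^{\ast})=\text{ran}(C)^{\bot}$, the inclusion $\ker(B^{\ast})\subseteq\ker(A^{\ast})$ becomes $\text{ran}(B)^{\bot}\subseteq\text{ran}(A)^{\bot}$, i.e. $\overline{\text{ran}(A)}\subseteq\text{ran}(B)$, the closure vanishing on the $B$ side because $\text{ran}(B)$ is closed. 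Since $\text{mul}(B^{\ast})=\text{dom}(B)^{\bot}$ and $\text{mul}(A^{\ast})=\text{dom}(A)^{\bot}$, the $T_{\text{mul}}$ condition is verbatim the asserted condition on $T_{0}$. Finally $\text{dom}(A^{\ast})\subseteq\text{dom}(B^{\ast})$, via $\text{dom}(C^{\ast})^{\bot}=\text{mul}(\overline{C})$, turns into $\text{mul}(B)\subseteq\overline{\text{mul}(A)}$, where I would invoke $\text{ran}(B)$ closed (hence $\text{mul}(B)$ closed by \cite[Corollary 6.4.5]{Sa}) to reconcile the closures. Checking that these closure reconciliations are faithful in both directions is the one point, besides the closed range step, that I would treat with care.
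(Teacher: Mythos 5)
Your proposal is correct and follows essentially the same route as the paper: the paper likewise applies Theorem \ref{T1} (resp.\ Theorem \ref{T2}) to the pair $(A^{\ast},B^{\ast})$ and translates the conditions via $\mathrm{mul}(C^{\ast})=\mathrm{dom}(C)^{\bot}$ and $\ker(C^{\ast})=\mathrm{ran}(C)^{\bot}$. The only difference is one of packaging: what you call the ``closed range theorem for linear relations'' and the ``closure reconciliations'' (closedness of $\mathrm{dom}(A^{\ast})$ and $\mathrm{ran}(B^{\ast})$, and the identities $\mathrm{dom}(A^{\ast})=\mathrm{mul}(A)^{\bot}$, $\mathrm{ran}(B^{\ast})=\ker(B)^{\bot}$, $\mathrm{ran}(A^{\ast})=\ker(A)^{\bot}$) are exactly what the paper imports wholesale from \cite[Lemma 6.4.12]{Sa}, so the step you flag as the main obstacle is resolved there by citation rather than by argument.
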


\begin{proof}
Since dom$\left( A\right) $ is closed it follows that dom$\left( A^{\ast
}\right) $ is closed \cite[Lemma 6.4.12 (i)]{Sa} and since ran$\left(
B\right) $ is closed it follows that ran$\left( B^{\ast }\right) $ is closed 
\cite[Lemma 6.4.12 (iii)]{Sa}. Also, since ran$\left( B\right) $ is closed
it follows that mul$\left( B\right) $ is closed.

$\left( 1\right) $ Since $A^{\ast }$ is a range space relation such that dom$%
\left( A^{\ast }\right) $, ran$\left( B^{\ast }\right) $ and $\ker \left(
B^{\ast }\right) $ are closed, by Theorem \ref{T1}, there is a bounded
linear operator $T\subseteq \mathcal{X}$ $\times $ $\mathcal{X}$ such that $%
A^{\ast }=B^{\ast }T$ if and only ran$\left( A^{\ast }\right) \subseteq $ran$%
\left( B^{\ast }\right) $ and mul$\left( B^{\ast }\right) =$mul$\left(
A^{\ast }\right) $. But mul$\left( A^{\ast }\right) =$dom$\left( A\right)
^{\bot }$and mul$\left( B^{\ast }\right) =$dom$\left( B\right) ^{\bot }$,
and then dom$\left( A\right) =\overline{\text{dom}\left( B\right) }$.

On the other hand, since ran$\left( A^{\ast }\right) =\ker \left( A\right)
^{\bot }$ and ran$\left( B^{\ast }\right) =\ker \left( B\right) ^{\bot }$%
\cite[Lemma 6.4.12 (iii)]{Sa},$\ $it follows that ran$\left( A^{\ast
}\right) \subseteq $ran$\left( B^{\ast }\right) $ if and only if $\ker
\left( B\right) \subseteq \ker \left( A\right) .$

$\left( 2\right) $ Since dom$\left( A^{\ast }\right) ,$ mul$\left( A^{\ast
}\right) $ and ran$\left( B^{\ast }\right) $ are closed, by Theorem \ref{T2}%
, there is a bounded linear operator $T\subseteq \mathcal{X}$ $\times $ $%
\mathcal{X}$ such that $A^{\ast }=TB^{\ast }$ if and only if dom$\left(
A^{\ast }\right) \subseteq $dom$\left( B^{\ast }\right) ,$ $\ker \left(
B^{\ast }\right) \subseteq \ker \left( A^{\ast }\right) $ and there is a
range space operator $T_{\text{mul}}\subseteq $mul$\left( B^{\ast }\right) 
\mathcal{\times }$mul$\left( A^{\ast }\right) $ such that dom$\left( T_{%
\text{mul}}\right) $ is a closed subspace of mul$\left( B^{\ast }\right) $
and ran$\left( T_{\text{mul}}\right) =$mul$\left( A^{\ast }\right) $. Since
dom$\left( A^{\ast }\right) =$mul$\left( A\right) ^{\bot }$ and dom$\left(
B^{\ast }\right) =$mul$\left( B\right) ^{\bot }$\cite[Lemma 6.4.12 (i)]{Sa}
dom$\left( A^{\ast }\right) \subseteq $dom$\left( B^{\ast }\right) $ if and
only if mul$\left( B\right) \subseteq \overline{\text{mul}\left( A\right) }$
and since ran$\left( A\right) ^{\bot }=\ker \left( A^{\ast }\right) $ and ran%
$\left( B\right) ^{\bot }=\ker \left( B^{\ast }\right)$, $\ker \left(
B^{\ast }\right) \subseteq \ker \left( A^{\ast }\right) $ if and only if $%
\overline{\text{ran}\left( A\right) }\subseteq $ran$\left( B\right) $,

Since mul$\left( A^{\ast }\right) =$dom$\left( A\right) ^{\bot }$ and mul$%
\left( B^{\ast }\right) =$dom$\left( B\right) ^{\bot }$, it folows that $%
T_{0}=T_{\text{mul}}.$
\end{proof}

\begin{corollary}
Let $\mathcal{X}$ be a separable Hilbert space and let $A\subseteq $ $%
\mathcal{X}$ $\times $ $\mathcal{X}$ and $B\subseteq $ $\mathcal{X}$ $\times 
$ $\mathcal{X}$ be two self-adjoint linear relations such that dom$\left(
A\right) \ $and ran$\left( B\right) $ are closed. Then:

\begin{enumerate}
\item There is a bounded linear operator $T\subseteq \mathcal{X}$ $\times $ $%
\mathcal{X}$ such that $A=BT$ if and only if $\ker \left( A\right) \subseteq 
$ker$\left( B\right) $ and dom$\left( B\right) =$dom$\left( A\right) .$

\item There is a bounded linear operator $T\subseteq \mathcal{X}$ $\times $ $%
\mathcal{X}$ such that $A=TB$ if and only if mul$\left( B\right) \subseteq $%
mul$\left( B\right) $, ran$\left( A\right) \subseteq $ran$\left( B\right) $
and there is a range space operator $T_{0}\subseteq $dom$\left( B\right)
^{\bot }\mathcal{\times }$dom$\left( A\right) ^{\bot }$ such that dom$\left(
T_{0}\right) $ is a closed subspace of dom$\left( B\right) ^{\bot }$ and ran$%
\left( T_{0}\right) =$dom$\left( A\right) ^{\bot }$.
\end{enumerate}
\end{corollary}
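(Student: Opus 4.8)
The plan is to obtain both statements by specializing Proposition \ref{P21} to self-adjoint relations. The hypotheses of that proposition are available: a self-adjoint relation is closed \cite{A, SAN}, hence a range space relation, and by assumption dom$\left( A\right) $ and ran$\left( B\right) $ are closed. The use of self-adjointness is that $A^{\ast }=A$ and $B^{\ast }=B$, so the equations $A=BT$ and $A=TB$ of the corollary are literally the equations $A^{\ast }=B^{\ast }T$ and $A^{\ast }=TB^{\ast }$ treated in Proposition \ref{P21}. Each part of the corollary will therefore be produced by taking the corresponding part of Proposition \ref{P21} and rewriting its conditions in terms of $A$ and $B$, using the duality relations mul$\left( A\right) =$dom$\left( A\right) ^{\bot }$, dom$\left( A\right) =$mul$\left( A\right) ^{\bot }$, $\ker \left( A\right) =$ran$\left( A\right) ^{\bot }$ and ran$\left( A\right) =\ker \left( A\right) ^{\bot }$ (and the same for $B$), which hold because $A$ and $B$ coincide with their adjoints. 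In particular dom$\left( A\right) ,$ ran$\left( A\right) ,$ mul$\left( A\right) ,$ $\ker \left( A\right) $ and their $B$-counterparts are all closed, so every closure appearing in Proposition \ref{P21} may be removed.

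For part $\left( 1\right) $ I would apply Proposition \ref{P21}$\left( 1\right) $ to $A^{\ast }=B^{\ast }T$. Its domain condition dom$\left( A\right) =\overline{\text{dom}\left( B\right) }$ becomes, since dom$\left( B\right) =$mul$\left( B\right) ^{\bot }$ is closed, the stated equality dom$\left( B\right) =$dom$\left( A\right) $. Its remaining condition is a range inclusion between the adjoints, and converting it into a condition on $A$ and $B$ is where the only genuine care is required. Using ran$\left( A\right) =\ker \left( A\right) ^{\bot }$ and ran$\left( B\right) =\ker \left( B\right) ^{\bot }$, this range inclusion translates, under the orthogonal-complement duality, into an inclusion between the two kernels; since $\ker \left( A\right) $ and $\ker \left( B\right) $ are closed, it takes the stated form $\ker \left( A\right) \subseteq \ker \left( B\right) $. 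Tracking the direction of this inclusion correctly as one passes through the orthogonal complements is the main obstacle, and is the only step that is not purely mechanical.

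For part $\left( 2\right) $ I would apply Proposition \ref{P21}$\left( 2\right) $ to $A^{\ast }=TB^{\ast }$. That proposition supplies a multivalued inclusion mul$\left( B\right) \subseteq \overline{\text{mul}\left( A\right) }$, a range inclusion $\overline{\text{ran}\left( A\right) }\subseteq $ran$\left( B\right) $, and a range space operator $T_{0}\subseteq $dom$\left( B\right) ^{\bot }\times $dom$\left( A\right) ^{\bot }$ with dom$\left( T_{0}\right) $ a closed subspace of dom$\left( B\right) ^{\bot }$ and ran$\left( T_{0}\right) =$dom$\left( A\right) ^{\bot }$. Since $A$ is self-adjoint, mul$\left( A\right) $ and ran$\left( A\right) $ are closed, so the two closures collapse and the first two conditions become the stated inclusions mul$\left( B\right) \subseteq $mul$\left( A\right) $ and ran$\left( A\right) \subseteq $ran$\left( B\right) $; the $T_{0}$-condition carries over verbatim, the spaces dom$\left( A\right) ^{\bot }$ and dom$\left( B\right) ^{\bot }$ being unchanged. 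I expect no difficulty here beyond checking that these closures disappear under self-adjointness, so that the three conditions of Proposition \ref{P21}$\left( 2\right) $ reduce exactly to the three stated conditions.
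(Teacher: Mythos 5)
Your route is exactly the paper's own: the published proof consists of observing that self-adjointness makes $\ker (A)$, mul$(A)$, $\ker (B)$ and mul$(B)$ closed and then invoking Proposition \ref{P21} with $A^{\ast }=A$ and $B^{\ast }=B$. The genuine problem lies in the very step you single out as "the main obstacle" in part (1), and you get it backwards. Proposition \ref{P21}(1) already states its condition directly as $\ker \left( B\right) \subseteq \ker \left( A\right) $ --- no translation through ranges is needed at all --- and if you nevertheless translate the underlying range inclusion ran$\left( A^{\ast }\right) \subseteq $ran$\left( B^{\ast }\right) $, i.e. $\ker \left( A\right) ^{\bot }\subseteq \ker \left( B\right) ^{\bot }$, the orthocomplement duality is \emph{inclusion-reversing} and returns $\ker \left( B\right) \subseteq \ker \left( A\right) $, not the $\ker \left( A\right) \subseteq \ker \left( B\right) $ you assert. (This is also what Theorem \ref{T1} gives directly: ran$\left( A\right) \subseteq $ran$\left( B\right) $ corresponds, for closed kernels, to $\ker \left( B\right) \subseteq \ker \left( A\right) $, in keeping with Douglas' theorem.) So your argument does not prove the corollary as printed; the printed kernel inclusion appears to be a typo --- just as part (2)'s "mul$\left( B\right) \subseteq $mul$\left( B\right) $" is evidently a misprint for mul$\left( B\right) \subseteq $mul$\left( A\right) $, which you silently and correctly repaired --- but a proof cannot "derive" the misprinted direction via an inverted duality step.

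There is a secondary gap in part (1): you justify the equality dom$\left( B\right) =$dom$\left( A\right) $ by claiming that dom$\left( B\right) =$mul$\left( B\right) ^{\bot }$ is closed. For a self-adjoint relation one only has mul$\left( B\right) =$dom$\left( B\right) ^{\bot }$, hence $\overline{\text{dom}\left( B\right) }=$mul$\left( B\right) ^{\bot }$; the domain itself need not be closed even though ran$\left( B\right) $ is --- take $B$ an unbounded self-adjoint operator with $0$ in its resolvent set, so that ran$\left( B\right) =\mathcal{X}$ is closed while dom$\left( B\right) $ is dense and proper. The condition must therefore remain dom$\left( A\right) =\overline{\text{dom}\left( B\right) }$, exactly as Proposition \ref{P21}(1) states it. Similarly, in part (2) your claim that ran$\left( A\right) $ is closed by self-adjointness is false (a compact positive injective operator is a counterexample); the collapse of $\overline{\text{ran}\left( A\right) }\subseteq $ran$\left( B\right) $ to ran$\left( A\right) \subseteq $ran$\left( B\right) $ is nevertheless legitimate, but because ran$\left( B\right) $ is closed by hypothesis, not for the reason you give. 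With these corrections part (2) carries over as you describe, and part (1) holds in the form $\ker \left( B\right) \subseteq \ker \left( A\right) $ and dom$\left( A\right) =\overline{\text{dom}\left( B\right) }$.
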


\begin{proof}
Since $A$ and $B$ are self-adjoint linear relations, it follows that $\ker
(A)$, mul$(A)$, $\ker (B)$ and mul$(B)$ are closed subspace in $\mathcal{X}$ 
\cite{SAN}. The rest of the proof follows from Proposition \ref{P21}.
\end{proof}

\end{document}